\newtheorem{theorem}{Theorem}
\newtheorem{lemma}{Lemma}
\newtheorem{proposition}{Proposition}
\newtheorem{question}[theorem]{Question}
\renewcommand*\backref[1]{}
\renewcommand*\backrefalt[4]{ \ifcase #1 \or (cited on page #2) \else (cited on pages #2) \fi}
\newcommand{\be}{\begin{equation}}
\newcommand{\ee}{\end{equation}}
\newcommand{\bea}{\begin{eqnarray}}
\newcommand{\eea}{\end{eqnarray}}
\newcommand{\vs}{\vspace{0.5cm}}
\def\XXint#1#2#3{{\setbox0=\hbox{$#1{#2#3}{\int}$ }
\vcenter{\hbox{$#2#3$ }}\kern-.6\wd0}}
\begin{document}

\title[Chern flat manifolds that are torsion-critical]{Chern flat manifolds that are torsion-critical}

\author{Dongmei Zhang}
\address{Dongmei Zhang. School of Mathematical Sciences, Chongqing Normal University, Chongqing 401331, China}
\email{{2250825921@qq.com}}\thanks{Zhang is supported by Chongqing graduate student research grant No.\,CYB240227. The corresponding author Zheng is partially supported by National Natural Science Foundations of China
with the grant No.\,12141101 and 12471039, Chongqing grant cstc2021ycjh-bgzxm0139, Chongqing Normal University grant 19XRC001, and is supported by the 111 Project D21024.}

\author{Fangyang Zheng}
\address{Fangyang Zheng. School of Mathematical Sciences, Chongqing Normal University, Chongqing 401331, China}
\email{20190045@cqnu.edu.cn; franciszheng@yahoo.com} \thanks{}

\subjclass[2020]{53C55 (primary), 53C05 (secondary)}
\keywords{Hermitian manifold; Chern connection; variation; Chern flat manifolds; torsion-critical manifolds}

\begin{abstract}
In our previous work, we introduced a special type of Hermitian metrics called {\em torsion-critical,} which are non-K\"ahler critical points of the $L^2$-norm of Chern torsion over the space of all Hermitian metrics with unit volume on a compact complex manifold.  In this short note, we restrict our attention to the class of compact Chern flat manifolds, which are compact quotients of complex Lie groups equipped with compatible left-invariant metrics. Our main result states that, if a Chern flat metric is torsion-critical, then the complex Lie group must be semi-simple, and conversely, any semi-simple complex Lie group admits a compatible left-invariant  metric that is torsion-critical.

\end{abstract}

\maketitle

\tableofcontents

\markleft{Dongmei Zhang and Fangyang Zheng}

\markright{torsion-critical manifolds}

\section{Introduction and statement of result}\label{intro}

In a previous work \cite{ZhangZ}, the authors introduced {\em torsion-critical metrics} which are Hermitian metrics on a compact complex manifold $M^n$ that are critical points of the functional of $L^2$-norm of the Chern torsion tensor $T$:
\begin{equation*} \label{eq:F}
{\mathcal F} (g) = V^{\frac{1-n}{n}} \int_M |T|^2 dv, \ \ \ \ \ \ g\in {\mathcal H}_M.
\end{equation*}
Here ${\mathcal H}_M$ is the set of all Hermitian metrics on $M^n$, $V$ is the volume of $g\in {\mathcal H}_M$, and $T$ is the torsion of the Chern connection $\nabla$ defined by $\,T(x,y)=\nabla_xy-\nabla_yx-[x,y]\,$ for any vector fields $x$, $y$ on $M^n$. Since $T=0$ if and only if $g$ is K\"ahler, we see that all K\"ahler metrics are torsion-critical, and they are the absolute minimum of the functional ${\mathcal F}$. Of course we are primarily interested in critical points of ${\mathcal F}$ that are non-K\"ahler.

As proved in \cite{ZhangZ}, torsion-critical metrics are characterized by the equation:
\begin{equation} \label{eq:tor-crit}
2\sigma_1 - \sigma_2 +2(\phi + \bar{\phi}) - 2 (\xi + \bar{\xi}) = (|T|^2-\frac{n-1}{n}b)\, \omega, \ \ \ \ \ \ b=V^{-1}\int_M|T|^2dv.
\end{equation}
where $\omega$ is the K\"ahler form of $g$, and the global $(1,1)$-forms $\sigma_1$, $\sigma_2$, $\phi$ and $\xi$ are given respectively by
\begin{eqnarray*}
&& \sigma_1= \sqrt{-1}\sum_{i,j} A_{i\bar{j}} \varphi_i \wedge \overline{\varphi}_j, \ \ \ \ \ \ \sigma_2= \sqrt{-1}\sum_{i,j} B_{i\bar{j}} \varphi_i \wedge \overline{\varphi}_j, \\
&& \phi = \sqrt{-1}\sum_{i,j} \phi_{i}^j \varphi_i \wedge \overline{\varphi}_j, \ \ \ \ \ \ \ \ \xi = \sqrt{-1}\sum_{i,j} \xi_{i}^j \varphi_i \wedge \overline{\varphi}_j.
\end{eqnarray*}
Here $e$ is any local unitary frame, with dual coframe $\varphi$, and
\begin{equation} \label{eq:AB}
A_{i\bar{j}} = \sum_{r,s}T^r_{is}\overline{T^r_{js}}, \ \ \ B_{i\bar{j}} = \sum_{r,s}T^j_{rs}\overline{T^i_{rs}}, \ \ \ \phi_{i}^j = \sum_r T^j_{ir} \overline{\eta}_r, \ \ \ \ \xi_{i}^j = \sum_r T^j_{ir,\,\bar{r}} ,
\end{equation}
where $T(e_i,e_k)=\sum_j T^j_{ik}e_j$, $\eta_i=\sum_k T^k_{ki}$, and the global $(1,0)$-form $\eta =\sum_i \eta_i\varphi_i$ is called {\em Gauduchon's torsion $1$-form} which is determined  by the equation $\partial (\omega^{n-1})=-\eta \wedge \omega^{n-1}$. The index after comma stands for covariant derivatives with respect to the Chern connection $\nabla$. 

The above consideration stems from Gauduchon's celebrated work \cite{Gau84} on the functional 
\begin{equation*}
{\mathcal G} (g) = V^{\frac{1-n}{n}} \int_M |\eta|^2 dv, \ \ \ \ \ \ g\in {\mathcal H}_M.
\end{equation*}
He proved that $g$ is a critical point of ${\mathcal G}$ if and only if it satisfies the following equation:
\begin{equation} \label{eq:Gau-crit}
\sqrt{-1} (\partial \overline{\eta} - \overline{\partial} \eta - \eta \wedge \overline{\eta}) = a\, \omega
\end{equation}
for some constant $a \geq 0$. He also proved \cite[Theorem III.4]{Gau84} that, when $n=2$, any critical point of ${\mathcal G}$ must be {\em balanced,} namely, with $\eta=0$. In \cite[Proposition 1]{ZhangZ}, we generalized it to higher dimensions, namely, we proved that for any $n$, $g$ is a critical point of ${\mathcal G}$ if and only if it is balanced. In other words, the functional ${\mathcal G}$ does not have any non-trivial critical points. 

Since $T$ and $\eta$ carry the same amount of information when $n=2$, we know that there is no non-K\"ahler torsion-critical metric when $n=2$. But when $n\geq 3$, there do exist non-K\"ahler metrics that are torsion-critical. In \cite{ZhangZ}, we gave an explicit example of such kind, which establishes the non-emptiness of the set (of non-K\"ahler torsion-critical metrics). The example is a Chern flat threefold. In the same paper, we also proved a non-existence result (\cite[Proposition 4]{ZhangZ}) which shows that such metrics are relatively rare. 

The main purpose of this short note is to answer a question raised at the end of \S 1  of \cite{ZhangZ}, which says that: {\em Classify all torsion-critical metrics that are Chern flat.}

By the classic work of Boothby \cite{Boothby}, compact Chern flat manifolds are exactly compact quotients of complex Lie groups (equipped with  compatible left-invariant metrics). We have the following

\begin{proposition}  \label{prop1}
Let $(M^n,g)$ be a compact Chern flat manifold. Denote by $G$ its universal cover which is a complex Lie group. If $g$ is non-K\"ahler torsion-critical, then $G$ must be semi-simple. Conversely, given any semi-simple complex Lie group $G$, it admits a compatible left-invariant metric $g_0$ satisfying (\ref{eq:tor-crit}), so any compact quotient of $(G,g_0)$ is non-K\"ahler torsion-critical.  
\end{proposition}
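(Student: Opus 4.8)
The plan is to exploit the fact that on a Chern flat manifold all torsion data become algebraic. Choosing a left-invariant unitary frame $e$, the components $T^k_{ij}$ are (up to sign) the structure constants $c^k_{ij}$ of the complex Lie algebra $\mathfrak g=\mathrm{Lie}(G)$, and since the Chern connection is flat with the left-invariant frame as its parallel frame, these constants are covariantly constant. In particular $\xi=0$ in (\ref{eq:tor-crit}), and every term there is a left-invariant $(1,1)$-form, so (\ref{eq:tor-crit}) reduces to an algebraic identity on $\mathfrak g$; by homogeneity $|T|^2$ is constant, hence $b=|T|^2$.

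For the forward direction I would first take the full trace of (\ref{eq:tor-crit}). Using $\tr A=\tr B=|T|^2$ from (\ref{eq:AB}), together with $\sum_i\phi^i_i=\sum_r\eta_r\overline{\eta}_r=|\eta|^2$, the trace collapses to $|T|^2+4|\eta|^2=|T|^2$, forcing $\eta=0$; hence also $\phi=0$, and (\ref{eq:tor-crit}) becomes the matrix identity $2A-B=\lambda\,\id$ with $\lambda=|T|^2/n>0$ (non-K\"ahlerness gives $T\neq0$). To upgrade this unimodularity to semi-simplicity, suppose $\mathfrak g$ is not semi-simple; then its radical contains a nonzero abelian ideal $\mathfrak a$, a complex ideal of some dimension $m\geq1$. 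Choosing the unitary frame so that $e_1,\dots,e_m$ span $\mathfrak a$, the ideal and abelian conditions force $c^k_{ij}=0$ whenever $i,j\leq m$, and $c^r_{is}=0$ for $r>m$ when $i\leq m$. Tracing $2A-B=\lambda\,\id$ over the block $i\leq m$ then yields $2P-(2P+Q)=m\lambda$, i.e. $-Q=m\lambda$, where $P=\sum_{a,b\leq m,\,c>m}|c^a_{bc}|^2\geq0$ and $Q=\sum_{a\leq m,\,b,c>m}|c^a_{bc}|^2\geq0$. Since $m\lambda>0$ this is impossible, so no such $\mathfrak a$ exists and $\mathfrak g$ is semi-simple.

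For the converse, given a semi-simple complex $\mathfrak g$ I would fix a compact real form $\mathfrak k$ and a basis $u_1,\dots,u_n$ of $\mathfrak k$ orthonormal for $-\kappa$, where $\kappa$ is the Killing form. These serve as a unitary $(1,0)$-frame for the left-invariant metric $g_0$ defined by $h(u_i,u_j)=\delta_{ij}$. The structure constants are then real and, by invariance of $\kappa$, totally antisymmetric, which immediately gives $\eta=0$ (hence $\phi=0$) and, by a short index manipulation, $A_{i\bar j}=B_{i\bar j}=-\kappa(u_i,u_j)=\delta_{ij}$. Thus $\sigma_1=\sigma_2=\omega$ and $|T|^2=\tr A=n$, so both sides of (\ref{eq:tor-crit}) equal $\omega$; the metric $g_0$ is torsion-critical, and non-K\"ahler since $T\neq0$.

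The main obstacle is the final step of the forward direction: extracting semi-simplicity, not merely unimodularity, from $2A-B=\lambda\,\id$. The decisive point is that restricting the trace to an abelian ideal produces a manifestly nonpositive left-hand side $-Q$ against a strictly positive right-hand side $m\lambda$; the real work is verifying the block-vanishing of the $c^k_{ij}$ and the bookkeeping that isolates exactly $-Q$. The converse is then essentially a normalization calculation, once the Killing form is used to pin down $A=B=\id$.
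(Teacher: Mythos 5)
Your proof is correct, and its skeleton coincides with the paper's: both reduce the torsion-critical equation \eqref{eq:tor-crit} on a Chern flat manifold to the algebraic identity $2A-B=\frac{b}{n}I$ with $b=|T|^2>0$, both rule out a nontrivial abelian ideal $\mathfrak{a}$ by tracing that identity over a unitary basis of $\mathfrak{a}$ (your $2P-(2P+Q)=-Q$ is literally the paper's computation $\sum_{\alpha}(2A_{\alpha\bar\alpha}-B_{\alpha\bar\alpha})=-\sum|T^{\alpha}_{ij}|^2$), and both verify the converse by showing $A=B=I$ for the canonical metric built from a compact real form. Where you genuinely diverge is in the supporting steps, and in each case your route is more self-contained. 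For the reductions, the paper cites \cite[Theorem 3]{YZ} (compact Chern flat implies balanced, so $\eta=\phi=0$), the identity $T^{\ell}_{ik,\bar j}=R_{k\bar j i\bar\ell}-R_{i\bar j k\bar\ell}$ from \cite[Lemma 7]{YZ} (so $\xi=0$), and \cite[Proposition 17]{AIOT} (so $|T|^2$ is constant); you instead observe that the left-invariant frame is Chern-parallel, making all covariant derivatives of $T$ vanish (hence $\xi=0$ and $\chi=0$), get $b=|T|^2$ from homogeneity, and then extract $\eta=0$ by tracing \eqref{eq:tor-crit} itself ($4|\eta|^2=0$) --- a clean internal derivation of balancedness that the paper outsources. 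In the converse, the paper obtains the key symmetry $T^j_{ik}+T^i_{jk}=0$ rather indirectly, from Bismut-torsion-parallelness of $g_0$ (\cite[Theorem 1.2]{PodestaZ}) via the tensor $C_{ij}=\sum T^r_{is}T^s_{jr}$ and $\nabla^b C=0$; you get the same symmetry immediately from the classical fact that structure constants of the compact real form are real and totally antisymmetric in a $(-\kappa)$-orthonormal basis, by ad-invariance of the Killing form --- simpler and avoiding external machinery. One small caution: the paper's normalization uses the Killing form of $\mathfrak{g}_{\mathbb R}$, whose restriction to $\mathfrak{u}$ is twice the Killing form of $\mathfrak{u}$, so if your $\kappa$ were the latter your $A=B$ would come out as a multiple $cI$ of the identity rather than $I$; this is harmless, since $2A-B=cI$ with $b=\mathrm{tr}\,A=cn$ still satisfies \eqref{eq:6} exactly (torsion-criticality is scale-invariant), but your assertion that ``both sides equal $\omega$'' depends on the specific normalization.
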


The metric $g_0$ on $G$ is called the {\em canonical metric}. In \cite{PodestaZ} an explicit computation was given. See also \cite{Gordon, Helgason, Lafuente} for earlier literatures on this construction. At this point, we do not know if any torsion-critical metric $g$ on a semi-simple complex Lie group $G$ must be a constant multiple of $g_0$ or not, although it is true in some special cases (such as $\mbox{SL}(2, {\mathbb C})$).

\vspace{0.3cm}

\section{Proof of Proposition \ref{prop1}} 

Recall that a Hermitian manifold $(M^n,g)$ is said to be {\em Gauduchon} if $\partial \overline{\partial} (\omega^{n-1})=0$. Equivalently, this is the case if and only if $|\eta|^2 = \chi$, where $\chi $ is the global function on $M$ defined by $\chi = \sum_i 
\eta_{i,\bar{i}}$ under any local unitary frame $e$, again the index after comma stands for covariant derivatives with respect to the Chern connection $\nabla$. 
A result of Angella, Istrati, Otiman, and Tardini, \cite[Proposition 17]{AIOT} says the following:

\begin{proposition}[\cite{AIOT}] \label{propAIOT}
Denote by $[g]$ the conformal class of $g$, namely, the set of all Hermitian metrics on $M^n$ conformal to $g$. Then $g$ is a critical point of ${\mathcal F}|_{[g]}$ if and only if
\begin{equation}\label{eq:trace}
4\big(|\eta|^2-\chi \big) = (n-1) \big(|T|^2-b\big),
\end{equation}
where $b$ is the average value of $|T|^2$. In particular, such a metric will be Gauduchon if and only if $|T|^2$ is a constant.
\end{proposition}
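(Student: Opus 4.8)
The plan is to derive (\ref{eq:trace}) as the Euler--Lagrange equation of the first variation of $\mathcal{F}$ restricted to the conformal class $[g]$, and then to observe that the final (``in particular'') assertion is an immediate algebraic consequence. Indeed, if (\ref{eq:trace}) holds, then the Gauduchon condition $|\eta|^2=\chi$ forces the right-hand side to vanish, so $|T|^2=b$ is constant; conversely, if $|T|^2\equiv b$ then the left-hand side vanishes, giving $|\eta|^2=\chi$ pointwise, which is exactly the Gauduchon condition. Thus the real content is the variational characterization, and I would obtain it by parametrizing $[g]$ as $g_t=e^{2tf}g$ for an arbitrary smooth real function $f$ and computing $\frac{d}{dt}\big|_{t=0}\mathcal{F}(g_t)$.

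Three ingredients are needed. First, the conformal behavior of the Chern torsion: in a local holomorphic frame the Chern Christoffel symbols satisfy $\widetilde\Gamma^k_{ij}=\Gamma^k_{ij}+2f_i\delta^k_j$, so the torsion transforms tensorially as $\widetilde T^k_{ij}=T^k_{ij}+2(f_i\delta^k_j-f_j\delta^k_i)$, where $f_i$ are the components of $\partial f$. Second, the norm is taken with respect to $\widetilde g=e^{2f}g$; since raising two indices and lowering one produces the overall weight $e^{-2f}$, a short computation using the identities $\sum_j T^j_{ij}=-\eta_i$ and $\sum_i T^i_{ij}=\eta_j$ gives
\[ |\widetilde T|^2_{\widetilde g}=e^{-2f}\Big(|T|^2-8\,\re\langle\eta,\partial f\rangle+8(n-1)|\partial f|^2\Big). \]
Third, the volume element scales as $dv_{\widetilde g}=e^{2nf}dv_g$. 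Replacing $f$ by $tf$ and isolating the $e^{2(n-1)tf}$ weight, only the linear-in-$t$ terms survive differentiation at $t=0$; in particular the quadratic term $|\partial f|^2$ drops out.

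Assembling these and differentiating the scale-invariant functional (the normalization $V^{(1-n)/n}$ contributing through $V'(0)=2n\int_M f\,dv_g$), I expect the first variation to reduce, after dividing by $V^{(1-n)/n}$, to
\[ 0=2(n-1)\int_M f\big(|T|^2-b\big)\,dv-8\int_M \re\langle\eta,\partial f\rangle\,dv, \]
where the first term combines the volume factor with the $e^{2(n-1)tf}$ weight. The remaining step is to integrate by parts the cross term. Here I would use the Chern-connection divergence identity on a compact Hermitian manifold: applying Stokes to the $(n,n-1)$-form $\alpha\wedge\frac{\omega^{n-1}}{(n-1)!}$ and invoking $\bar\partial(\omega^{n-1})=-\overline{\eta}\wedge\omega^{n-1}$ yields $\int_M\sum_i\alpha_{i,\bar i}\,dv=\int_M\sum_i\alpha_i\overline{\eta_i}\,dv$ for every $(1,0)$-form $\alpha$. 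Taking $\alpha=f\eta$ and using $\chi=\sum_i\eta_{i,\bar i}$ converts the cross term into $\int_M f(|\eta|^2-\chi)\,dv$. Since $f$ is arbitrary, the pointwise identity $4(|\eta|^2-\chi)=(n-1)(|T|^2-b)$ follows, which is (\ref{eq:trace}).

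The main obstacle is the second ingredient: correctly tracking the conformal weights of the torsion norm and, in particular, producing the exact coefficient $-8$ on the $\re\langle\eta,\partial f\rangle$ cross term, since this is what fixes the numerical constants in (\ref{eq:trace}). The only other delicate point is the integration-by-parts formula above, in which the torsion $1$-form $\eta$ enters precisely because the metric is not assumed K\"ahler (nor even Gauduchon); getting its sign and conjugation right is essential, and I would cross-check it by taking $\alpha=\eta$, which must reproduce the always-valid identity $\int_M(|\eta|^2-\chi)\,dv=0$.
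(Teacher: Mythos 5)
Your proposal is correct, but note that the paper itself contains no proof of this statement: it is imported verbatim as \cite[Proposition 17]{AIOT}, so your blind attempt is compared against the external source rather than an internal argument, and it is in fact the natural first-variation computation that such a conformal-class result requires. I checked your constants against the paper's conventions and they are consistent: with $\widetilde g=e^{2f}g$ one has $\widetilde\Gamma^k_{ij}=\Gamma^k_{ij}+2f_i\delta^k_j$, hence $\widetilde T^k_{ij}=T^k_{ij}+2(f_i\delta^k_j-f_j\delta^k_i)$, and with the paper's norm convention $|T|^2=\sum_{i,j,k}|T^k_{ij}|^2$ the cross term is $2\,\re\sum T^k_{ij}\overline{2(f_i\delta^k_j-f_j\delta^k_i)}=-8\,\re\langle\eta,\partial f\rangle$ and the quadratic term is $8(n-1)|\partial f|^2$, exactly as you state; combining with $dv_{\widetilde g}=e^{2nf}dv_g$ and $V'(0)=2n\int_M f\,dv$ yields your Euler--Lagrange identity, and your divergence formula $\int_M\sum_i\alpha_{i,\bar i}\,dv=\int_M\sum_i\alpha_i\overline{\eta_i}\,dv$ has the correct sign under the paper's convention $\partial(\omega^{n-1})=-\eta\wedge\omega^{n-1}$. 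A useful internal cross-check, independent of \cite{AIOT}: tracing the torsion-critical equation \eqref{eq:tor-crit} with $\mathrm{tr}\,\sigma_1=\mathrm{tr}\,\sigma_2=|T|^2$, $\mathrm{tr}(\phi+\bar\phi)=2|\eta|^2$ and $\mathrm{tr}(\xi+\bar\xi)=2\chi$ reproduces \eqref{eq:trace} exactly, confirming your coefficients. The only point you leave implicit is the reality of $\chi=\sum_i\eta_{i,\bar i}$: your integration by parts gives the complex identity $\int_M\langle\eta,\partial f\rangle\,dv=\int_M f(|\eta|^2-\chi)\,dv$, and passing to real parts in the Euler--Lagrange equation needs $\chi\in\mathbb{R}$, which follows from the reality of the $(n,n)$-form $\sqrt{-1}\,\partial\overline{\partial}(\omega^{n-1})$ (and is implicitly assumed in the paper's statement that Gauduchon is equivalent to $|\eta|^2=\chi$); this is a one-line remark worth adding, not a gap.
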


Now let $(M^n,g)$ be a compact Chern flat manifold. By \cite{Boothby}, we know that its universal cover is $(G,\tilde{g})$, where $G$ is a complex Lie group and $\tilde{g}$ is a left-invariant metric on $G$ compatible with the complex structure of $G$. It is known (\cite[Theorem 3]{YZ}) that $g$ must be balanced, namely, $\eta =0$. (Note that the balanced condition is a natural generalization of K\"ahlerness, and is widely studied in non-K\"ahler geometry. See \cite{Fu} and references therein for more discussion. Besides Chern flat manifolds, Bismut flat manifolds were classified in \cite{WYZ}, while the classification of Riemannian flat Hermitian manifolds are only known in complex dimension $3$ so far \cite{KYZ}).  So in particular, $\phi =0$. By the first identity in Lemma 7 of \cite{YZ}, we know that under any local unitary frame $e$ we have
$$ T^{\ell}_{ik,\bar{j}} = R_{k\bar{j}i\bar{\ell}} - R_{i\bar{j}k\bar{\ell}}, \ \ \ \ \ \forall \ 1\leq i,j,k,\ell \leq n. $$
Here $T^j_{ik}$, $R_{i\bar{j}k\bar{\ell}}$  are respectively the components under $e$ of the torsion and curvature of the Chern connection $\nabla$, and index after comma stands for covariant derivatives with respect to $\nabla$. From this we know that any Chern flat metric $g$  (namely, $R=0$) will have $\xi=0$. Hence by the torsion-critical equation (\ref{eq:tor-crit}) we obtain the following
\begin{lemma} \label{lemma1}
Let $(M^n,g)$ be a compact Chern flat manifold. Then it is torsion-critical if and only if 
\begin{equation} \label{eq:6}
2A - B = \frac{b}{n}I,
\end{equation} 
for some constant $b$. 
\end{lemma}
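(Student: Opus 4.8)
The plan is to use the two reductions already in hand---$\phi=0$, coming from the balanced condition $\eta=0$ of \cite{YZ}, and $\xi=0$, coming from Chern flatness $R=0$ through the identity $T^\ell_{ik,\bar j}=R_{k\bar j i\bar\ell}-R_{i\bar j k\bar\ell}$---to turn the torsion-critical equation (\ref{eq:tor-crit}) into a pointwise identity of Hermitian matrices, and then to read off (\ref{eq:6}) by taking a trace. No analysis or PDE input is needed beyond these two facts; the content of the lemma is purely algebraic.

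First I would substitute $\phi=\bar{\phi}=\xi=\bar{\xi}=0$ into (\ref{eq:tor-crit}), which leaves $2\sigma_1-\sigma_2=(|T|^2-\frac{n-1}{n}b)\,\omega$. Writing the K\"ahler form in the unitary coframe as $\omega=\sqrt{-1}\sum_i \varphi_i\wedge\overline{\varphi}_i$ and recalling the definitions of $\sigma_1$ and $\sigma_2$ from (\ref{eq:AB}), both sides are global $(1,1)$-forms, so their coefficient matrices must coincide. Matching the coefficient of $\sqrt{-1}\,\varphi_i\wedge\overline{\varphi}_j$ gives the pointwise matrix equation $2A-B=(|T|^2-\frac{n-1}{n}b)\,I$, where $A=(A_{i\bar j})$, $B=(B_{i\bar j})$, and $I$ is the identity.

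The decisive step is then a trace computation. From (\ref{eq:AB}) one computes $\tr(A)=\sum_{i,r,s}|T^r_{is}|^2$ and $\tr(B)=\sum_{i,r,s}|T^i_{rs}|^2$, both of which equal $\sum_{i,j,k}|T^k_{ij}|^2=|T|^2$; hence $\tr(A)=\tr(B)=|T|^2$. Taking the trace of $2A-B=(|T|^2-\frac{n-1}{n}b)\,I$ and using $\tr(2A-B)=\tr(A)=|T|^2$ yields $|T|^2=n|T|^2-(n-1)b$, which forces $|T|^2=b$ at every point (so $|T|^2$ is constant and equal to its own average, consistent with the left-invariance of the metric). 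Substituting $|T|^2=b$ back into the matrix equation gives exactly $2A-B=\frac{b}{n}I$, i.e.\ (\ref{eq:6}).

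The converse runs the same computation backwards: if $2A-B=\frac{b}{n}I$ for some constant $b$, then taking the trace and using $\tr(2A-B)=|T|^2$ gives $|T|^2=b$, whence $|T|^2-\frac{n-1}{n}b=\frac{b}{n}$ and the reduced form of (\ref{eq:tor-crit}) holds; together with $\phi=\xi=0$ this is precisely (\ref{eq:tor-crit}), so $g$ is torsion-critical. I expect the only delicate point---the main, though minor, obstacle---to be the bookkeeping in the identity $\tr(A)=\tr(B)=|T|^2$, which must be checked against the normalization convention adopted for $|T|^2$ and the index ranges in (\ref{eq:AB}); once that is pinned down, the rest is direct substitution.
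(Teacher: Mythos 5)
Your proof is correct and takes essentially the same route as the paper: both reduce (\ref{eq:tor-crit}) via $\phi=0$ (balancedness of compact Chern flat metrics, \cite{YZ}) and $\xi=0$ (from $R=0$ through the identity $T^{\ell}_{ik,\bar{j}}=R_{k\bar{j}i\bar{\ell}}-R_{i\bar{j}k\bar{\ell}}$) to the pointwise matrix identity $2A-B=\big(|T|^2-\frac{n-1}{n}b\big)I$, and then pin down $|T|^2\equiv b$. The only cosmetic difference is that where the paper invokes Proposition \ref{propAIOT} (via the Gauduchon property) to conclude $|T|^2$ is the constant $b$, you obtain the same conclusion by directly tracing the matrix equation using $\tr A=\tr B=|T|^2$, which is in fact the identical computation, since (\ref{eq:trace}) is precisely the trace of (\ref{eq:tor-crit}).
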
 
By taking trace on the above equations, we see that the constant $b$ above is necessarily the average value of the norm square $|T|^2$ of the Chern torsion, as we have seen in ({\ref{eq:tor-crit}). Since compact Chern flat metrics and always balanced (\cite[Theorem 3]{YZ}), hence Gauduchon, so by Proposition \ref{propAIOT} we know that $|T|^2$ is a constant function which equals to $b$. The following algebraic fact is well-known, so we omit its proof:
\begin{lemma} \label{lemma2}
A Lie algebra is semi-simple if and only if it does not contain any non-trivial abelian ideal. 
\end{lemma}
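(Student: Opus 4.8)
The plan is to prove both implications via the standard structure theory, using the characterization of semi-simplicity through the solvable radical. Write $\mathfrak{g}$ for the Lie algebra (finite-dimensional over $\C$, as in our setting) and recall that $\mathfrak{g}$ is semi-simple precisely when its radical $\mathfrak{r}$, the unique maximal solvable ideal, vanishes.

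The forward direction is immediate. Suppose $\mathfrak{g}$ is semi-simple, so $\mathfrak{r}=0$. Any abelian ideal $\mathfrak{a}$ is in particular solvable, hence contained in the radical $\mathfrak{r}=0$; therefore $\mathfrak{a}=0$, and $\mathfrak{g}$ admits no non-trivial abelian ideal. This direction carries no difficulty.

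For the converse I would argue by contraposition: assuming $\mathfrak{g}$ is not semi-simple, I would exhibit a non-trivial abelian ideal. Non-semi-simplicity gives $\mathfrak{r}\neq 0$. Since $\mathfrak{r}$ is solvable, its derived series $\mathfrak{r}=\mathfrak{r}^{(0)}\supseteq\mathfrak{r}^{(1)}\supseteq\mathfrak{r}^{(2)}\supseteq\cdots$ terminates at $0$; let $\mathfrak{a}=\mathfrak{r}^{(m)}$ be the last non-zero term. Then $[\mathfrak{a},\mathfrak{a}]=\mathfrak{r}^{(m+1)}=0$, so $\mathfrak{a}$ is abelian and non-trivial.

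The one step that requires care --- and which I expect to be the main (though mild) obstacle --- is checking that $\mathfrak{a}$ is an ideal of all of $\mathfrak{g}$, not merely of $\mathfrak{r}$. The key point is that each term of the derived series is a \emph{characteristic} ideal of $\mathfrak{r}$, i.e. invariant under every derivation of $\mathfrak{r}$: the derived subalgebra is preserved by any derivation since $D[x,y]=[Dx,y]+[x,Dy]$, and by induction a characteristic ideal of a characteristic ideal is again characteristic. Now for any $x\in\mathfrak{g}$, the restriction $\operatorname{ad}(x)|_{\mathfrak{r}}$ is a derivation of $\mathfrak{r}$ --- here one uses that $\mathfrak{r}$ is an ideal of $\mathfrak{g}$, so $\operatorname{ad}(x)$ maps $\mathfrak{r}$ into itself. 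Hence $[\mathfrak{g},\mathfrak{a}]=\operatorname{ad}(\mathfrak{g})\,\mathfrak{a}\subseteq\mathfrak{a}$, so $\mathfrak{a}$ is an ideal of $\mathfrak{g}$, which completes the contrapositive and the proof.
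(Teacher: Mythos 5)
Your proof is correct. The paper itself offers no proof to compare against --- it explicitly omits one, calling the lemma well-known --- and your argument is the standard one: the forward direction via solvability of abelian ideals, and the converse by taking the last non-zero term $\mathfrak{a}=\mathfrak{r}^{(m)}$ of the derived series of the radical, where the one delicate step (that $\mathfrak{a}$ is an ideal of all of $\mathfrak{g}$, not just of $\mathfrak{r}$) is handled correctly, since each $\mathfrak{r}^{(k)}$ is invariant under every derivation of $\mathfrak{r}$ and $\operatorname{ad}(x)|_{\mathfrak{r}}$ is such a derivation for every $x\in\mathfrak{g}$ because $\mathfrak{r}$ is an ideal.
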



Now we are ready to prove the first part of Proposition \ref{prop1}:
\begin{lemma}  \label{lemma3}
Let $(M^n,g)$ be a compact Chern flat manifold. Denote by $G$ its universal cover which is a complex Lie group. If $g$ is non-K\"ahler torsion-critical, then $G$ must be semi-simple. 
\end{lemma}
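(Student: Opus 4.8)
The plan is to prove the contrapositive: if $G$ is not semi-simple, then $g$ cannot be torsion-critical. By Lemma \ref{lemma2}, if $G$ is not semi-simple, then its Lie algebra $\mathfrak{g}$ contains a non-trivial abelian ideal $\mathfrak{a}$. Since everything is left-invariant, I will work purely at the Lie algebra level, identifying the Chern torsion $T$ with the Lie bracket: for a complex Lie group with its compatible left-invariant structure, $T(e_i,e_k) = -[e_i,e_k]$ on the $(1,0)$-part, so the structure constants $T^j_{ik}$ are precisely (the negatives of) the Lie algebra structure constants. My goal is to show that the matrix identity $2A - B = \frac{b}{n}I$ from Lemma \ref{lemma1} forces $b > 0$ on a suitable subspace but $b = 0$ on the abelian ideal, yielding a contradiction unless $\mathfrak{a}$ is trivial.

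The key step is to choose a unitary frame adapted to the abelian ideal $\mathfrak{a}$, say with $e_1, \dots, e_p$ spanning $\mathfrak{a}$ (a unitary basis of $\mathfrak{a}$, which I can arrange since $\mathfrak{a}$ is a complex subspace and $g$ is Hermitian). Because $\mathfrak{a}$ is abelian, $[e_i, e_k] = 0$ for $i,k \le p$, so $T^j_{ik} = 0$ whenever both lower indices lie in $\{1,\dots,p\}$. Because $\mathfrak{a}$ is an \emph{ideal}, $[e_i, e_k] \in \mathfrak{a}$ whenever $i \le p$, so $T^j_{ik} = 0$ unless $j \le p$ when $i \le p$. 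I then evaluate the diagonal entries of $2A - B$ on an index $i \le p$. Using $A_{i\bar{i}} = \sum_{r,s} |T^r_{is}|^2$ and $B_{i\bar{i}} = \sum_{r,s} |T^i_{rs}|^2$, the abelian-ideal constraints should force the $A_{i\bar i}$ contribution to collapse (since $T^r_{is}$ with $i \le p$ requires $s > p$ to be nonzero, and then $r \le p$), while the $B_{i\bar i}$ term counts brackets landing in $\mathfrak{a}$. I expect to extract from $(2A-B)_{i\bar i} = \frac{b}{n}$ summed over $i \le p$ a relation of the form $(\text{something} \ge 0) - (\text{something} \ge 0) = \frac{pb}{n}$ that pins down the sign of $b$.

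The decisive computation is to sum the diagonal identity over $i \in \{1,\dots,p\}$ and compare against summing over the complementary indices, exploiting the symmetry that a bracket $[e_i,e_s]$ with $i \le p < s$ contributes to $A$ through its source index $i$ and to $B$ through its target index (which lies in $\{1,\dots,p\}$). Writing $S = \sum_{i\le p,\, s,\, r\le p} |T^r_{is}|^2$ for the total "weight" of brackets out of $\mathfrak{a}$, the $\mathfrak{a}$-block trace of $2A$ will be $2S$ while the $\mathfrak{a}$-block trace of $B$ will also be exactly $S$ (every such bracket lands in $\mathfrak{a}$ and is counted once by its target index). This gives $\tr_{\mathfrak a}(2A - B) = 2S - S = S = \frac{pb}{n} \ge 0$. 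Separately, the global trace gives $\tr(2A - B) = \tr A = |T|^2 = b$ (using $\tr A = \tr B = |T|^2$), so $b \ge 0$ always; the torsion-critical condition with $b>0$ (non-K\"ahler) combined with a second identity should force $\mathfrak a = 0$.

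The main obstacle will be closing the argument to a genuine contradiction rather than a mere inequality: knowing $S = \frac{pb}{n} \ge 0$ alone does not immediately kill $\mathfrak{a}$. I anticipate needing a complementary estimate — most likely examining the off-diagonal or the complementary-block entries of $2A - B$, or using that $\mathfrak{a}$ being abelian means $A_{i\bar i} = 0$ for $i \le p$ (no brackets originate \emph{within} $\mathfrak a$ landing back via the $A$-pattern) while $B_{i\bar i} > 0$ as soon as some bracket hits $\mathfrak a$. If $A_{i\bar i} = 0$ for all $i \le p$, then the diagonal identity reads $-B_{i\bar i} = \frac{b}{n}$, forcing $B_{i\bar i} = -\frac{b}{n} \le 0$; since $B_{i\bar i} \ge 0$ this gives $b \le 0$, hence $b = 0$ (as $b \ge 0$), hence $T = 0$, contradicting non-K\"ahlerness. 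Verifying precisely that $A_{i\bar i} = 0$ on the abelian ideal — i.e.\ that no structure constant $T^r_{is}$ with fixed $i \le p$ survives in the $A$-sum — is the delicate point, and it is exactly here that I must use \emph{both} the abelian property and the ideal property together; I would double-check this against the explicit $\mathrm{SL}(2,\mathbb{C})$ example before trusting the general computation.
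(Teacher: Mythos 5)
Your overall strategy (contrapositive via Lemma \ref{lemma2}, a unitary frame $e_1,\dots,e_p$ adapted to the abelian ideal $\mathfrak{a}$, and tracing $2A-B$ over the $\mathfrak{a}$-block) is exactly the paper's, but the decisive computation contains two genuine errors. First, your claim that $\tr_{\mathfrak a}B = S$ ``since every such bracket lands in $\mathfrak a$ and is counted once by its target index'' is wrong on two counts: the mixed brackets are counted \emph{twice} in $B_{\alpha\bar\alpha}=\sum_{a,b}|T^\alpha_{ab}|^2$ (the sum runs over ordered pairs $(a,b)$ and $T$ is skew in its lower indices), and — this is the crux — the ideal property does not forbid brackets of two vectors \emph{outside} $\mathfrak a$ from landing \emph{inside} $\mathfrak a$; ideals absorb, they do not block inflow. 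The correct count is
\begin{equation*}
\sum_{\alpha\le p} B_{\alpha\bar\alpha} \;=\; 2S + Q, \qquad Q:=\sum_{\alpha\le p}\ \sum_{i,j>p}|T^{\alpha}_{ij}|^2 \ \ge 0,
\end{equation*}
so that $\tr_{\mathfrak a}(2A-B) = 2S-(2S+Q) = -Q \le 0$, not $+S$. With the correct sign the argument closes immediately: equation (\ref{eq:6}) forces this trace to equal $\frac{pb}{n}$ with $b=|T|^2>0$ in the non-K\"ahler case, a contradiction — which is precisely the paper's proof. Your ``main obstacle'' paragraph was an artifact of the sign error.

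Second, your fallback claim that $A_{\alpha\bar\alpha}=0$ for $\alpha\le p$ is false, and it contradicts your own earlier (correct) observation that $A_{\alpha\bar\alpha}$ collapses to $\sum_{\beta\le p}\sum_{s>p}|T^{\beta}_{\alpha s}|^2$, which is generally nonzero: a vector in $\mathfrak a$ bracketed with a vector outside $\mathfrak a$ can land in $\mathfrak a$. The simplest counterexample is the two-dimensional non-abelian complex Lie algebra with $[e_1,e_2]=e_1$ and $\mathfrak a=\C e_1$, where $A_{1\bar 1}=|T^1_{12}|^2>0$. Your proposed sanity check against $\mathrm{SL}(2,\C)$ is vacuous here, since $\mathrm{SL}(2,\C)$ is semi-simple and contains no abelian ideal at all; you would need a solvable or non-semi-simple test case to probe exactly the constants your argument depends on. In short: the route is right and one bookkeeping correction away from the paper's proof, but as written the final contradiction rests on two false identities.
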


\begin{proof}
Assume the contrary that $G$ is not semi-simple. Then by Lemma \ref{lemma2} its Lie algebra ${\mathfrak g}$ will contain a non-trivial abelian ideal ${\mathfrak a}$. Let $\{ e_1, \ldots , e_n\}$ be a unitary basis of ${\mathfrak g}$, so that $\{ e_1, \ldots , e_r\}$ is a basis of ${\mathfrak a}$. Since ${\mathfrak a}$ is an abelian ideal, and $[e_a,e_b]= - \sum_c T^c_{ab}e_c$, we have 
$$  T^{\ast }_{\alpha \beta}=T^i_{\alpha j} =0, \ \ \ \ \forall \ 1\leq \alpha, \beta \leq r, \ \forall \ r+1\leq  i,j\leq n, \ \forall \ 1\leq \ast \leq n.$$
By the definition of $A$ and $B$, we have
$$ A_{\alpha \bar{\alpha}} = \sum_{a,b=1}^n |T^a_{\alpha b}|^2 =  \sum_{\beta =1}^r \sum_{i=r+1}^n |T^{\beta}_{\alpha i}|^2, \ \ \  B_{\alpha \bar{\alpha}} = \sum_{a,b=1}^n |T^{\alpha}_{a b}|^2 =  2\sum_{\beta =1}^r \sum_{i=r+1}^n |T^{\alpha}_{\beta i}|^2 + \sum_{i,j=r+1}^n |T^{\alpha }_{ij}|^2. $$
Therefore
$$ \sum_{\alpha =1}^r \big( 2A_{\alpha \bar{\alpha}}- B_{\alpha \bar{\alpha}}\big) = - \sum_{i,j,\alpha} |T^{\alpha }_{ij}|^2 \leq 0.$$
This of course would contradict with equation (\ref{eq:6}) in Lemma \ref{lemma1}, as $b=|T|^2>0$. So we know that if a compact Chern flat manifold $(M^n,g)$ is non-K\"ahler torsion-critical, then the complex Lie group $G$ which is the universal cover of $M$ can not contain any abelian ideal, hence $G$ is semi-simple. This completes the proof of the lemma.
\end{proof}

Now we are ready to prove Proposition \ref{prop1}.

\begin{proof}[{\bf Proof of Proposition \ref{prop1}.}]
We just need to show that any semi-simple complex Lie group $G$ admits a left-invariant metric compatible with its complex structure that is torsion-critical. Let $G$ be a  semi-simple complex Lie group. We want to show that its {\em canonical metric} $g_0$ on $G$ is torsion-critical. First let us  recall the definition of $g_0$ from \cite{PodestaZ}. The construction is well-known and we refer the readers to \cite{Gordon, Helgason, Lafuente} for more references. Here we will follow the notations and calculation in \cite{PodestaZ}.

Let ${\mathfrak g}$ be the Lie algebra of $G$, and we  denote by $\mathfrak g_{\mathbb R}$ the Lie algebra $\mathfrak g$ considered as a real algebra. A Cartan decomposition of  $\mathfrak g_{\mathbb R}$ is given by ${\mathfrak g_{\mathbb R}} = {\mathfrak u} + J{\mathfrak u}$, where ${\mathfrak u}$ is the Lie algebra of a compact semi-simple Lie group $U$. Let us denote by $K$ the {\em Cartan-Killing form} of ${\mathfrak g_{\mathbb R}}$, namely, 
$$ K(x,y) = \mbox{tr}(\mbox{ad}_x\mbox{ad}_y), \ \ \ \forall \ x,y \in {\mathfrak g_{\mathbb R}}. $$
The canonical metric $g_0$ on ${\mathfrak g_{\mathbb R}}$ is defined by
$$g_0|_{{\mathfrak u} \times {\mathfrak u}} = -K,\ \ \ g_0({\mathfrak u},J{\mathfrak u})=0,\ \ \ g_0|_{J{\mathfrak u} \times J{\mathfrak u}} = K.$$
Note that this metric depends on the choice of a compact real form ${\mathfrak u}$, but as two such real forms are conjugate by an inner automorphism of $\mathfrak g_{\mathbb R}$, the corresponding metrics are holomorphically isometric. 

Since $J$ is an integrable complex structure on ${\mathfrak g_{\mathbb R}}$, we have by definition that
$$ [x,y]-[Jx,Jy] + J[Jx,y] + J[x,Jy] = 0, \ \ \ \ \forall \ x,y \in {\mathfrak g_{\mathbb R}}. $$
Also,  ${\mathfrak g}$ being a complex Lie algebra means that 
$$ [x,y] + [Jx,Jy] = 0,  \ \ \ \ \forall \ x,y \in {\mathfrak g_{\mathbb R}}. $$
Combine the two we get
\begin{equation*}
[Jx,y]=J[x,y], \ \ \ \ \forall \ x,y \in {\mathfrak g}_{\mathbb R}. 
\end{equation*}
Now let $\{ u_1, \ldots , u_n, Ju_1, \ldots , Ju_n\}$ be an orthonormal basis of ${\mathfrak g}_{\mathbb R} = {\mathfrak u}+ J{\mathfrak u}$, and for convenience write $Ju_i=u_{i^{\ast}}=u_{n+i}$, $1\leq i\leq n$. 
Denote by $S^j_{ik}$ the structure constants of ${\mathfrak u}$, we have
$$ [u_i,u_k] =\sum_{j=1}^n S^j_{ik} u_j = - [u_{i^{\ast}}, u_{k^{\ast}}], \ \ \ [u_i, u_{k^{\ast}}] = \sum_{j=1}^n S^j_{ik}u_{j^{\ast}}. $$
For any $1\leq i,j,k \leq n$, we have
$$ \mbox{ad}_{u_i}\mbox{ad}_{u_k}(u_j) = [u_i, [u_k, u_j]] = \sum_{r,\ell =1}^n  S^{\ell}_{ir} S^r_{kj} u_{\ell}, \ \ \ \ \ \mbox{ad}_{u_i} \mbox{ad}_{u_k}(u_{j^{\ast}}) = \sum_{r,\ell =1}^n  S^{\ell}_{ir} S^r_{kj} u_{\ell^{\ast}}. $$
Therefore we have 
\begin{equation} \label{eq:SS}
\delta_{ik}=g_0(u_i,u_k) = - \mbox{tr}(\mbox{ad}_{u_i}\mbox{ad}_{u_k}) = -2 \sum_{j,r=1}^n  S^{j}_{ir} S^r_{kj}, \
 \ \ \ \forall \ 1\leq i,k\leq n.
\end{equation}

Next let us write $e_i=\frac{1}{\sqrt{2}}(u_i - \sqrt{-1}Ju_i)$, $1\leq i\leq n$. Then $\{ e_1, \ldots , e_n\}$ becomes a unitary basis for $({\mathfrak g}, g_0)$. Under the basis $e$, we compute
\begin{equation*}
[e_i,e_k] \,= \,\frac{1}{2}[ u_i - \sqrt{-1}Ju_i, \,u_k - \sqrt{-1}Ju_k] \, = \, [u_i,u_k] - \sqrt{-1}J [u_i,u_k] \, = \, \sqrt{2}\sum_{j=1}^n S^j_{ik}e_j.
\end{equation*}
Thus the Chern torsion components under $e$ becomes 
\begin{equation} \label{eq:TS}
T^j_{ik} = - \sqrt{2}S^j_{ik}, \ \ \ \ \ 1 \leq i,j,k\leq n. 
\end{equation}
It has been proved in \cite[Theorem 1.2]{PodestaZ} that $g_0$ is Bismut torsion parallel, namely, $\nabla^bT^b=0$ where $\nabla^b$ is the Bismut connection and $T^b$ is its torsion (see \cite{Bismut}. Since $T^b$ can be expressed in the Chern torsion $T^c=T$, this condition is equivalent to $\nabla^bT=0$. Bismut torsion parallel manifolds, and in particular those Hermitian manifolds where the Bismut curvature obeys all K\"ahler symmetries, were actively studied in recent years. See \cite{YZZ, ZZ, Zheng} and the references therein for more discussions). Hence $\nabla^bC=0$ where the symmetric $2$-tensor $C$ is defined by $C_{ij}=\sum_{r,s=1}^n T^r_{is}T^s_{jr}$ under any unitary frame. Under our particular choice of frame $e$ above, we have by (\ref{eq:SS}) and (\ref{eq:TS}) that
$$ C_{ij} = \sum_{r,s=1}^n T^r_{is}T^s_{jr} = 2 \sum_{r,s=1}^n S^r_{is}S^s_{jr} = -\delta_{ij}, \ \ \ \ \forall \ 1\leq i,j\leq n. $$
On the other hand, since $g_0$ is Chern flat, the Bismut connection $\nabla^b$ has coefficients (see for instance \cite{YZ} or \cite{VYZ})
$$ \nabla^b_{e_k}e_i = \sum_{j=1}^n T^j_{ik}e_j. $$
So the fact that $\nabla^bC=0$ yields
$$ \sum_{r=1}^n \big( T^r_{ik}C_{rj} + T^r_{jk}C_{ir}\big) = 0, \ \ \ \ \forall  \ 1\leq i,j,k\leq n. $$
That is, 
\begin{equation}
 T^j_{ik}  + T^i_{jk} =0 , \ \ \ \ \ \ \ \forall \ 1\leq i,j,k\leq n.
\end{equation}
Since $T$ is skew-symmetric with respect to its lower two indices, we also have $T^j_{ki}+T^i_{kj}=0$. By the definition (\ref{eq:AB}) for tensors $A$ and $B$, we have
$$ A_{i\bar{j}} =\sum_{r,s=1}^n T^r_{is} \overline{T^r_{js}} = - \sum_{r,s=1}^n T^s_{ir} \overline{T^r_{js}} = -2 \sum_{r,s=1}^n S^s_{ir} S^r_{js} =\delta_{ij}. $$
Similarly,
$$ B_{i\bar{j}} =\sum_{r,s=1}^n T^j_{rs} \overline{T^i_{rs}} = - \sum_{r,s=1}^n T^s_{jr} \overline{T^r_{is}} = -2 \sum_{r,s=1}^n S^s_{jr} S^r_{is} =\delta_{ij}. $$
So $A=B=I$ and by Lemma \ref{lemma1} we know that the canonical metric  $g_0$ on the semi-simple complex Lie group $G$ is torsion-critical. This completes the proof of Proposition \ref{prop1}. 
\end{proof}

\begin{question}
Given a semi-simple complex Lie group $G$, let $g$ be any compatible left-invariant metric. If $g$ is torsion-critical, must $g$ be a constant multiple of the canonical metric $g_0$? Or on a weaker term, must $g$ be a product metric with respect to the simple factors of $G$?
\end{question}

We do not know the answer to either of these questions. Here we remark that, in the special case when $G=\mbox{SL}(2,{\mathbb C})$, the answer is yes. To see this, let ${\mathfrak g}$ be the Lie algebra of $G$, and $\{ e_1, e_2, e_3\}$ be a basis of ${\mathfrak g}$, given by traceless matrices
$$ e_1= \frac{1}{2} \left[ \begin{array}{cc} 0 & 1 \\ -1 & 0 \end{array} \right] , \ \ \ \ e_2= \frac{1}{2} \left[ \begin{array}{cc} 0 & i \\ i & 0 \end{array} \right] , \ \ \ \ e_3= \frac{1}{2} \left[ \begin{array}{cc} i & 0 \\ 0 & -i \end{array} \right] , $$
then we have 
$$ [e_1, e_2]=e_3, \ \ \ \ [e_2, e_3]=e_1,\ \ \ \ [e_3, e_1]=e_2. $$
The canonical metric $g_0$ is the one using $e$ as a unitary frame. Under $e$, its only non-vanishing Chern torsion components are $$T^1_{23}=T^2_{31}=T^3_{12} =-1. $$
Now suppose $g$ is another metric on ${\mathfrak g}$. Then by a unitary change of $e$ if necessary, we may assume that there are positive constants $a_i$, $1\leq i\leq 3$, so that $\tilde{e}$ becomes a unitary frame of $g$ where $\tilde{e}_i=a_ie_i$. Since
$$ [\tilde{e}_i, \tilde{e}_j ] = - \sum_k \big( \frac{a_ia_j}{a_k}T^k_{ij} \big) \tilde{e}_k = - \sum_k \tilde{T}^k_{ij}\, \tilde{e}_k , $$
we see that the components of the $A$, $B$ tensors for $g$ under the frame $\tilde{e}$ becomes
$$ \tilde{A}_{i\bar{j}} = \sum_{r,s} \tilde{T}^r_{is}  \overline{ \tilde{T}^r_{js} } = a_ia_j \sum_{r,s} \frac{a_s^2}{a_r^2} T^r_{is}  \overline{ T^r_{js}}= a_ia_j\delta_{ij} \sum_{r,s}\frac{a_s^2}{a_r^2} |T^r_{is}|^2 .$$
So $\tilde{A}$ is diagonal and 
$$ \tilde{A}_{i\bar{i}} = a_i^2 \big( \frac{a_j^2}{a_k^2} + \frac{a_k^2}{a_j^2} \big) , $$
where $(ijk)$ is a cyclic permutation of $(123)$. Similarly,
$$ \tilde{B}_{i\bar{j}} = \sum_{r,s} \tilde{T}^j_{rs}  \overline{ \tilde{T}^i_{rs} } = \frac{1}{a_ia_j} \sum_{r,s} a_s^2a_r^2 T^j_{rs}  \overline{ T^i_{rs}}= \frac{\delta_{ij}}{a_i^2} \sum_{r,s}a_s^2a_r^2 |T^i_{rs}|^2 .$$
Hence $\tilde{B}$ is also diagonal with 
$$ \tilde{B}_{i\bar{i}} = \frac{1}{a_i^2}2 a_j^2a_k^2, $$
where $(ijk)$ is a cyclic permutation of $(123)$. If $g$ is torsion-critical, then $2\tilde{A}_{i\bar{i}}- \tilde{B}_{i\bar{i}} = b$ for each $i$, or equivalently,
$$ a_i^4a_j^4 + a_i^4a_k^4 - a_j^4 a_k^4 = \frac{b}{2}a_i^2a_j^2a_k^2  $$
for all $(ijk)$ that is the cyclic permutation of $(123)$. Since each $a_i$ is assumed to be positive, the above implies that $a_1=a_2=a_3$. This shows that $g$ is a constant multiple of $g_0$.

\vspace{0.3cm}

\vs

\noindent\textbf{Acknowledgments.} The second named author would like to thank Bo Yang and Quanting Zhao for their interests and/or helpful discussions. He would also like to thank Fabio Podest\`a from whom he learned useful relevant properties on Lie algebras. Both authors are very grateful to the anonymous referee who made a number of  corrections/suggestions  which enhanced the readability of the article.

\vs

\end{document}